\documentclass[reqno, 12pt]{amsart}

\usepackage{mathrsfs}
\usepackage{amscd}
\usepackage{amsmath}
\usepackage{latexsym}
\usepackage{amsfonts}
\usepackage{amssymb}
\usepackage{amsthm}
\usepackage{graphicx}
\usepackage{hyperref}
\usepackage{makecell}
\usepackage{array}
\usepackage{booktabs}
\usepackage{multirow}
\usepackage{color,xcolor}

\parindent = 20 pt
\parskip = 4 pt
\textwidth 6.5in \textheight 9.6in \setlength{\topmargin}{0.1in}
\addtolength{\topmargin}{-\headheight}
\addtolength{\topmargin}{-\headsep}

\setlength{\oddsidemargin}{0in} \oddsidemargin  0.0in
\evensidemargin 0.0in

\setlength{\oddsidemargin}{0in} \oddsidemargin  0.0in
\evensidemargin 0.0in \textwidth 6.5in \textheight 9.2in
\setlength{\topmargin}{0.1in} \addtolength{\topmargin}{-\headheight}
\addtolength{\topmargin}{-\headsep}

\widowpenalty=10000

\newtheorem{theorem}{Theorem}[section]
\newtheorem{assumption}[theorem]{Assumption}
\newtheorem{proposition}[theorem]{Proposition}

\newtheorem{lemma}[theorem]{Lemma}

\newtheorem{remark}[theorem]{Remark}

\numberwithin{equation}{section}

\title[Inverse random source problem]{Inverse random source problem for Maxwell equations in an inhomogeneous medium}

  {\author[T. Wang]{Tianjiao Wang}\address{School of Mathematical Sciences, Zhejiang University, Hangzhou 310058, China}}\email{wangtianjiao@zju.edu.cn}

\author[X. Xu]{Xiang Xu}
\address{School of Mathematical Sciences, Zhejiang University, Hangzhou 310058, China}
\email{xxu@zju.edu.cn}
\author[Y. Zhao]{Yue Zhao}
\address{School of Mathematics and Statistics, and Key Lab NAA-MOE, Central China Normal University,
Wuhan 430079, China}
\email{zhaoyueccnu@163.com}

\subjclass[2010]{35R30, 78A46, 35R60.}
\keywords{stochastic Maxwell equations,  inverse source problem, white noise, stability, inhomogeneous media.}

\begin{document}

\begin{abstract}

This paper concerns the inverse random source problem of the stochastic Maxwell equations driven by white noise  in an inhomogeneous background medium.
The well-posedness is established for the direct source problem, and the estimates and regularity of the solution are obtained. 
A logarithmic stability estimate is established for the inverse problem of determining the strength of the random source. The analysis
only requires the random Dirichlet data at a fixed frequency.

\end{abstract}

\maketitle

\section{Introduction}

Inverse source problem in electromagnetic waves arise in many scientific and industrial areas such as antenna design and synthesis, biomedical imaging, and optical
tomography \cite{blz}.  It aims to determine the unknown source from appropriate measurements of the radiating fields.
For instance, the imaging modality for magnetoencephalography (MEG) in medical imaging is a non-invasive neurophysiological technique which measures the electric or magnetic fields generated by neuronal activity of the brain  \cite{abf, Nara}. The spatial distributions of the measured fields are analyzed to localize the source currents in the brain. Driven by the important applications, inverse source problems have attracted much attention by many researchers in both the engineering and the mathematical communities \cite{isakov}.

In most existing works, the sources have been considered to be deterministic functions for the inverse electromagnetic source scattering problems.
However, in situations involving unpredictable environments, incomplete knowledge of the system, and random measurement noise, it has been realized 
that random sources are able to better capture the uncertain behavior of the surrounding environments  \cite{De}. Recently,  uniqueness of an inverse random source problem was obtained in \cite{LW_Maxwell} for Maxwell equations in a homogeneous medium. 

In many practical applications, the background medium in which the electromagnetic waves propagate is not homogeneous. For instance, in the medical imaging of the brain, it is important to incorporate the sudden change of sound speed across the human head \cite{LZZ}. Moreover, it is possible to achieve some desirable radiation pattern that would otherwise not be realistically possible for a source embedded in free space \cite{MKB}. This phenomenon finds important applications in the antenna community in designing antenna embedding materials or substrates such as nonmagnetic dielectrics, magneto-dielectrics, and double negative meta-materials, in order to achieve specified electromagnetic radiation patterns. However, uniqueness and stability issues remain widely open for the inverse random source problems on Maxwell equations in inhomogeneous media.

We consider the mathematical study of the electromagnetic inverse random source problem in an inhomogeneous medium.
Consider the time-harmonic Maxwell equations
\begin{align}\label{me}
\nabla\times\boldsymbol E(x) - {\rm i}k\boldsymbol H(x) = 0, \quad
\nabla\times\boldsymbol H(x) + {\rm i}k n(x)\boldsymbol E(x) = \boldsymbol
J(x),\quad x\in\mathbb R^3,
\end{align}
where ${k}>0$ is the wavenumber, $n$ is the refractive index
and $m:=1-n \in C_0^\infty (B_R)$ with $B_R=\{x\in\mathbb R^3: |x|<R\}$.
$\boldsymbol J $ is the random source driven by an additive white noise of the form
\[
\boldsymbol J =\dot{\boldsymbol W}_x = \sqrt{\sigma(x)}(\dot W_1(x), \dot W_2(x), \dot W_3(x))^{\top},
\]
 where $\dot W_1(x), \dot W_2(x)$ and $\dot W_3(x)$ are three independent one-dimensional white noise
 which are defined on a complete probability space $(\Omega, {\mathcal A}, \mathbb P)$, and $\sigma(x)\in C_0^\infty(B_R)$ is the strength of the random source.
Eliminating the magnetic field $\boldsymbol
H$ from \eqref{me}, we obtain the decoupled Maxwell system for the electric
field $\boldsymbol E$:
\begin{equation}\label{ef}
 \nabla\times(\nabla\times\boldsymbol E)-{k}^2 n \boldsymbol E={\rm i}{k}\boldsymbol J\quad\text{in}~ \mathbb R^3.
\end{equation}
In addition, the following Silver--M\"{u}ller radiation condition is required
at infinity to ensure the uniqueness of the direct problem: 
\begin{equation}\label{src}
\lim_{r\to \infty}((\nabla\times\boldsymbol E) \times
 \widehat x - {\rm i}{k} r \boldsymbol E) = 0, \quad r=|x|.
\end{equation}
Denote the boundary of $B_R$ by $\partial B_R$.
The inverse problem is to determine the strength $\sigma$ of the random source from the tangential trace of the random electric field 
$\boldsymbol E\times \boldsymbol \nu$
measured on $\partial B_R$ with $\boldsymbol \nu$ being the outward unit normal vector on $\partial B_R$.

 Compared with the deterministic counterparts, if the random source is a rough field which does not exist pointwisely, then the random source should be understood
 in the sense of distribution. As a result, the stochastic Maxwell equation should also be studied in distributional sense. 
 To deal with this issue, the random source was required to satisfy a divergence free condition in \cite{LW_Maxwell}, and then the Maxwell equation \eqref{ef}
 reduced to a vector-valued Helmholtz equation whose well-posedness and regularity have been studied in \cite{LLW}. However, in our setting the random source is an additive white noise which does not satisfy the divergence free condition. Hence, new method has to be developed to establish the well-posedness of 
 the direct scattering problem. For the inverse random source problems  driven by white noise, uniqueness and stability were achieved for acoustic waves in
 \cite{LL}. But the analysis there cannot be carried over to Maxwell equations, since the Green tensor of Maxwell equations is more singular than that
 of the Helmholtz equation. Due to the above difficulties, the inverse random source problem for Maxwell equations driven by an additive white noise is completely open so far.
 This work initiates the mathematical analysis of the direct and inverse source scattering problems for the stochastic Maxwell equations driven by an additive white noise.
 
 In this paper, we establish the well-posedness for \eqref{ef} in the sense of distribution and obtain the regularity for the electric field. The key ingredient in the 
 analysis is employing the scattering theory to investigate the resolvent of the differential operator defined on Sobolev space with negative smooth index.
 With the help of the resolvent estimates and the transparent boundary condition, we establish an integral equation which connects the random source and the tangential
 trace of the electric field on the boundary $\partial B_R$. Based on the integral equation, for the first time, the stability is derived for the inverse source problem of Maxwell equations driven by an additive white noise in inhomogeneous media. The proof employs Ito isometry
 and constructions of complex geometric optics solutions to the associated Maxwell equation. 
 The analysis requires the Dirichlet data only at a fixed frequency. Our methods are unified which can be applied to other stochastic wave equations
 in inhomogeneous media or with potential functions.
 
 Stability for the inverse random source problem obtained in this work exhibits some features which differ significantly from its deterministic counterparts.
As a matter of fact, general uniqueness does not hold for the deterministic inverse source problems of Maxwell equations even if multi-frequency data is given, unless the vector-valued source is divergence free \cite{Monk, Zhao}. 
 This phenomenon may be attributed to the fact that for stochastic inverse source problems, it is less meaningful to characterize the source by a particular
 realization. It is the statistics such as the mean and variance that are of more interests 
 in stochastic inverse problems, which are used to quantify the uncertainties of the random sources.
  
 This paper is organized as follows. In Section \ref{dp1}, we study the well-posedness of the direct source problem. Section \ref{ip}
 is devoted to the inverse random source problem. An integral equation is established which connects the random source and the electric field on the boundary.  Based on the integral equation, a logarithmic stability estimate is derived for the inverse random source problem.

\section{Direct scattering problem}\label{dp1}

In this section, we establish the well-posedness of the direct scattering problem \eqref{ef}--\eqref{src} and give the regularity of the solution.

We introduce some function spaces.
Denote the standard Sobolev spaces by $H^s(\mathbb R^3):=W^{s,2}(\mathbb R^3)$. The function spaces $H^s_{ loc}:=H^s_{  loc}(\mathbb{R}^3)$ and $H^{s}_{comp}:=H^s_{ comp}(\mathbb{R}^3)$ are respectively defined by \begin{align*}
	H^s_{loc}(\mathbb{R}^3) &:=\{u :\chi u \in H^s(\mathbb{R}^3),\forall \, \chi \in C_0^\infty(\mathbb{R}^3)\}, \\ H^s_{  comp}(\mathbb{R}^3) &:=\{u\in H^s(\mathbb R^3) :\exists \, \chi \in C_0^\infty(\mathbb{R}^3),\,\chi u =u\}.
\end{align*} 
Denote $H(curl):=\{\boldsymbol u: \boldsymbol u\in L^2(\mathbb R^3)^3, \nabla\times \boldsymbol u\in L^2(\mathbb R^3)^3\}$ and then
\begin{align*}
H_{loc}(curl):&=\{\boldsymbol u: \chi \boldsymbol u \in H(curl),\forall \, \chi \in C_0^\infty(\mathbb{R}^3)\}.
\end{align*}

The bounded linear operators between two function spaces $X$ and $Y$ are denoted by $\mathcal{L}(X,Y)$ with the operator norm $\|\cdot\|_{\mathcal{L}(X,Y)}$.  Moreover, we use $a \lesssim b $ to stand for $a \le C b$, where $C>0$ is a generic constant whose specific value is not required but should be clear from the context. 

We introduce the operator $\mathcal E_0(t)$ which solves the equation
\[
\square_0 \mathcal E_0(t)=0,\quad \mathcal E_0(0)=0,\quad \partial_t\mathcal E_0(0)={\mathbb I},
\]
where $\square_0:=\partial^2_t+\nabla\times\nabla\times\cdot$ and $\mathbb I$ is the identity operator. As a consequence, 
given a source $\boldsymbol f\in L^2_{comp}(\mathbb R^3)^3$,
$\boldsymbol U_0(x, t) = \mathcal E_0(t)(\boldsymbol f)$ satisfies the following decoupled Maxwell equation
\[
\partial_{t}^2U_0(x, t) +\nabla\times(\nabla\times  \boldsymbol U_0(x, t)) = 0
\]
with
\[
\boldsymbol U_0(x, 0) = 0, \quad \partial_t \boldsymbol U_0(x, 0) = \boldsymbol f.
\]
It follows from  \cite[Section 7.2]{acl}
\begin{align}\label{regularity}
\mathcal E_0(t)(\boldsymbol f)\in L^2((0, T), H_{loc}(curl)), \quad \partial_t\mathcal E_0(t)(\boldsymbol f)\in L^2((0, T), L^2_{loc}(\mathbb R^3)^3).
\end{align}
Denoting the free resolvent by $R_0(\lambda) = (\nabla\times(\nabla\times\cdot)- \lambda^2)^{-1}$,
we have the representation of $R_0(\lambda)$ as follows
 \[
 R_0(\lambda)=\int_0^\infty e^{ {\rm i}\lambda t} \mathcal E_0(t)\,\mathrm{d}t.
 \] 
Let $\chi\in C^\infty_0(\mathbb R^3)$ be a cutoff function. The strong Huygens's principle implies that 
\[
 (\mathcal E_0(t) \chi)(x)=0,\quad \text{when} \,\, t>\sup\{|x-y|:y \in \text{supp} \chi\}.
 \]  Then we obtain \[
\chi  R_0(\lambda) \chi =\int_0^L e^{{\rm i}\lambda t} \chi \mathcal E_0(t) \chi \,\mathrm{d}t,
\] where $L> \text{diam}\,\text{supp} \chi$.
  Furthermore, we have 
 \begin{align}\label{fre}
\chi  R_0(\lambda) \chi=({\rm i}\lambda)^{-1}\int_0^L  \partial_t(e^{{\rm i}\lambda t}) \chi \mathcal E_0(t) \chi \,\mathrm{d}t=-({\rm i}\lambda)^{-1}\int_0^L  e^{{\rm i}\lambda t} \chi  \partial_t\mathcal E_0(t) \chi \,\mathrm{d}t.
\end{align}
Combing \eqref{regularity} and \eqref{fre} we obtain that 
\begin{align*}
\chi  R_0(\lambda)\chi : L^2(\mathbb R^3)^3  \to L^2(\mathbb R^3)^3
\end{align*}
is an analytic family of operators for $\lambda \in \mathbb C$ with the resolvent estimate
\[
\|\chi R_0(\lambda)\chi\|_{\mathcal{L}(L^2(\mathbb R^3)^3, L^2(\mathbb R^3)^3)}\lesssim\frac{1}{|\lambda|}.
\]

Let $\mathbf G (x, y)$ be the Green
tensor for the Maxwell system. Explicitly, we have
\begin{equation*}
 \mathbf G (x, y)= {\rm i} \lambda
g(x, y)\mathbf I_3+\frac{\rm i}{\lambda}\nabla_{x}\nabla_{x}^\top g(x,
y),
\end{equation*}
where $g(x, y)=\frac{1}{4\pi}\frac{e^{{\rm i}\lambda|x-y|}}{|x-y|}$ is the fundamental solution of the three-dimensional Helmholtz
equation and $\mathbf I_3$ is the $3\times 3$ identity matrix. Therefore, the resolvent $R_0(\lambda): L^2_{comp}(\mathbb{R}^3)^3\to H_{loc}(curl)$ can be represented as follows 
\begin{align}\label{ie}
R_0(\lambda)( \boldsymbol f) = \mathbf G * \boldsymbol f =\int_{\mathbb R^3}\mathbf G (x, y) \boldsymbol f(y){\rm d}y, \quad \boldsymbol f \in L_{comp}^2(\mathbb{R}^3)^3, \,\,\lambda
\in\mathbb C.
\end{align}

Now we intend to extend the domain of the resolvent $\chi R_0(\lambda)\chi$ from $L^2$ to $H^s$ and derive the corresponding resolvent estimates.
If $\boldsymbol f \in H^1(\mathbb{R}^3)^3$ and $|\alpha|=1$, we have
	\begin{align}
		\partial^\alpha (\chi R_0(\lambda) \chi \boldsymbol f) &=\partial^\alpha(\chi ( \mathbf G * (\chi \boldsymbol f)))=(\partial^\alpha  \chi)( \mathbf G * (\chi \boldsymbol f))+\chi ( \mathbf G * \partial^\alpha (\chi \boldsymbol f)) \notag\\ &=(\partial^\alpha  \chi)( \mathbf G * (\chi \boldsymbol f))+ \chi ( \mathbf G * \partial^\alpha \chi \boldsymbol f)+\chi ( \mathbf G * \partial^\alpha \boldsymbol f \chi ), \label{Cut1}
	\end{align}  
	which implies $\chi R_0(\lambda) \chi \boldsymbol f\in H^1(\mathbb R^3)^3$. Then by induction we have $\chi R_0(\lambda) \chi \boldsymbol f\in H^n(\mathbb R^3)^3$
	for $\boldsymbol f\in H^n(\mathbb R^3)^3,$ where $n$ is any positive integer.
	Further, using the interpolation of fractional order Sobolev spaces, i.e., \[H^s(\Omega)=[L^2(\Omega),H^n(\Omega)]_{s/n}, \quad  n>s>0,\quad n \in \mathbb{N},\] we have that 
	$\chi R_0(\lambda)\chi: H^s(\mathbb R^3)^3\to H^s(\mathbb R^3)^3$ for all $s\geq 0$ with resolvent estimates
	\[
\|\chi R_0(\lambda) \chi\|_{\mathcal L(H^s(\mathbb R^3)^3, H^s(\mathbb R^3)^3)}\lesssim \frac{1}{|\lambda|},\quad s\geq 0.
\]

Now we consider the case $s<0$. In this case, the integral representation \eqref{ie} should be treated as  a distribution in the following sense but not a pointwisely defined function:
\[
\langle  \mathbf G * \boldsymbol f, \boldsymbol \phi\rangle = \langle  \boldsymbol f, \mathbf G * \boldsymbol \phi\rangle, \quad \boldsymbol \phi \in C_0^\infty(\mathbb R^3)^3,
\]
where $\langle \cdot, \cdot\rangle$ denotes the inner product in the Hilbert space $L^2(\mathbb R^3)^3$.
Using the definition of convolution of distributions and the symmetry $\mathbf G(x, y) = \mathbf G(y, x)$ of the Green tensor, we obtain \begin{align*}
		\left\langle\chi R_0(\lambda) \chi \boldsymbol f, \boldsymbol \phi\right\rangle &=\left\langle \mathbf G*  (\chi  \boldsymbol f),\chi \boldsymbol\phi  \right\rangle=\left\langle \chi  \boldsymbol f, \left\langle\mathbf G(x-y), \boldsymbol\phi \chi \right\rangle_x \right\rangle_y \\ &=\left\langle  \chi  \boldsymbol f , \left\langle\mathbf G(y-x),\boldsymbol\phi \chi  \right\rangle_x \right\rangle_y
		\\&= \left\langle \chi \boldsymbol f , \mathbf G * (\boldsymbol\phi \chi) \right\rangle=\left\langle  \boldsymbol f,\chi R_0(\lambda)\chi \boldsymbol\phi \right\rangle, \quad \boldsymbol\phi \in C_0^\infty(\mathbb{R}^3)^3.
	\end{align*}
	Then letting $\boldsymbol\varphi\in H^{-s}(\mathbb R^3)^3$ and approximating it by a sequence of smooth functions $\{\boldsymbol\varphi_j\}_{j=1}^\infty$ we obtain
	\begin{align}\label{3}
	\left\langle\chi R_0(\lambda) \chi \boldsymbol f, \boldsymbol \varphi_j\right\rangle = \left\langle  \boldsymbol f,\chi R_0(\lambda)\chi \boldsymbol\varphi_j \right\rangle.
	\end{align}
	As $\chi R_0(\lambda)\chi \boldsymbol\varphi_j\in H^{-s}(\mathbb R^3)^3$ with 
	$\|\chi R_0(\lambda)\chi \boldsymbol\varphi_j\|_{H^{-s}(\mathbb R^3)^3}\lesssim \frac{1}{|\lambda|}\|\boldsymbol\varphi_j\|_{H^{-s}(\mathbb R^3)^3}$,
	we have
	\[
	|\left\langle  \boldsymbol f,\chi R_0(\lambda)\chi \boldsymbol\varphi_j \right\rangle|\lesssim \frac{1}{|\lambda|}
	\| \boldsymbol f\|_{H^{s}(\mathbb R^3)^3}\|\boldsymbol\varphi_j\|_{H^{-s}(\mathbb R^3)^3}
	\] 
	which gives by letting $j\to\infty$
	that the resolvent $\chi R_0(\lambda) \chi: H^s(\mathbb R^3)^3\to H^s(\mathbb R^3)^3$ for $s< 0$ is a bounded operator with the corresponding resolvent estimate 
\[
\|\chi R_0(\lambda) \chi\|_{\mathcal L(H^s(\mathbb R^3)^3, H^s(\mathbb R^3)^3)}\lesssim \frac{1}{|\lambda|},\quad s<0.
\]

		In summary, we obtain the following theorem, which concerns the analyticity and estimates of the free resolvent.
\begin{proposition}\label{free}
Let $\chi\in C_0^\infty(\mathbb R^3)$. The free resolvent $\chi R_0(\lambda) \chi: H^s\to H^s$ is an analytic family of bounded operators for $\lambda\in\mathbb C$
with the resolvent estimates
\[
\|\chi R_0(\lambda) \chi\|_{\mathcal L(H^s, H^s)}\lesssim \frac{1}{|\lambda|}, \,\,s\in\mathbb R.
\]
\end{proposition}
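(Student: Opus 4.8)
The plan is to assemble the statement in three stages — first the case $s=0$, then positive integer (hence, by interpolation, positive real) exponents, and finally negative exponents by duality — exactly along the lines developed in the paragraphs preceding the proposition.

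For $s=0$ I would start from the representation \eqref{fre}, namely
\[
\chi R_0(\lambda)\chi = -({\rm i}\lambda)^{-1}\int_0^L e^{{\rm i}\lambda t}\,\chi\,\partial_t\mathcal E_0(t)\,\chi\,\mathrm dt ,
\]
valid for any $L>\mathrm{diam}\,\mathrm{supp}\,\chi$ thanks to the strong Huygens principle, which cuts the a priori infinite time integral defining $R_0(\lambda)$ down to the compact interval $[0,L]$. By \eqref{regularity} the map $t\mapsto \chi\,\partial_t\mathcal E_0(t)\,\chi$ is an $\mathcal L(L^2(\mathbb R^3)^3,L^2(\mathbb R^3)^3)$-valued function that is integrable on $[0,L]$, and since $t\mapsto e^{{\rm i}\lambda t}$ is entire in $\lambda$ and uniformly bounded on $[0,L]$ over compact sets of $\lambda$, differentiation under the integral sign (or Morera's theorem) shows that $\lambda\mapsto \chi R_0(\lambda)\chi$ is an analytic $\mathcal L(L^2,L^2)$-valued family; the prefactor $({\rm i}\lambda)^{-1}$ then yields $\|\chi R_0(\lambda)\chi\|_{\mathcal L(L^2,L^2)}\lesssim |\lambda|^{-1}$.

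Next I would push to $s=n\in\mathbb N$ by differentiating the kernel representation $\chi R_0(\lambda)\chi\boldsymbol f=\chi(\mathbf G*(\chi\boldsymbol f))$. For $|\alpha|=1$ the Leibniz identity \eqref{Cut1} splits $\partial^\alpha(\chi R_0(\lambda)\chi\boldsymbol f)$ into a principal term $\chi(\mathbf G*(\chi\partial^\alpha\boldsymbol f))$ plus commutator terms in which $\partial^\alpha$ lands on a cutoff function; each of the latter is again of the form $\tilde\chi R_0(\lambda)\tilde\chi(\text{something already in }L^2)$ and is therefore controlled in $L^2$ by the previous step with no loss of derivatives. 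Iterating and inducting on $n$ gives $\chi R_0(\lambda)\chi\in\mathcal L(H^n,H^n)$ with the same $|\lambda|^{-1}$ decay, and interpolation $H^s=[L^2,H^n]_{s/n}$ covers all real $s>0$; analyticity is preserved under interpolation. Finally, for $s<0$ I would argue by duality using the symmetry $\mathbf G(x,y)=\mathbf G(y,x)$. Interpreting $\mathbf G*(\chi\boldsymbol f)$ as a distribution, the distributional definition of convolution gives the pairing identity $\langle\chi R_0(\lambda)\chi\boldsymbol f,\boldsymbol\phi\rangle=\langle\boldsymbol f,\chi R_0(\lambda)\chi\boldsymbol\phi\rangle$ for $\boldsymbol\phi\in C_0^\infty(\mathbb R^3)^3$; approximating $\boldsymbol\varphi\in H^{-s}(\mathbb R^3)^3$ by smooth functions as in \eqref{3} and invoking the already-proven bound $\|\chi R_0(\lambda)\chi\boldsymbol\varphi\|_{H^{-s}}\lesssim|\lambda|^{-1}\|\boldsymbol\varphi\|_{H^{-s}}$ (legitimate since $-s>0$) yields
\[
|\langle\chi R_0(\lambda)\chi\boldsymbol f,\boldsymbol\varphi\rangle|\lesssim |\lambda|^{-1}\|\boldsymbol f\|_{H^s(\mathbb R^3)^3}\,\|\boldsymbol\varphi\|_{H^{-s}(\mathbb R^3)^3},
\]
so that, by $(H^s)'=H^{-s}$, this is precisely the asserted $\mathcal L(H^s,H^s)$ bound; analyticity carries over because the family is locally uniformly bounded and analytic when tested against a dense set of $\boldsymbol\varphi$.

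\textbf{Main obstacle.} The delicate point is the regime $s<0$: here $\mathbf G*\boldsymbol f$ is genuinely only a distribution — the Maxwell Green tensor is more singular than the Helmholtz fundamental solution because of the $\nabla_x\nabla_x^\top g$ term — so the duality identity cannot be read off from Fubini on an absolutely convergent integral and must be justified carefully through the distributional convolution together with the symmetry of $\mathbf G$, and with a density argument that does not degrade the uniform-in-$\lambda$ rate. A secondary, more routine issue is verifying that analyticity in $\lambda$ survives both the interpolation and the weak-$\ast$/duality passages, which follows from local uniform boundedness plus analyticity on dense sets via Morera/Vitali.
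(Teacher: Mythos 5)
Your proposal is correct and follows essentially the same route as the paper: the $L^2$ case via the Huygens-truncated time integral \eqref{fre} and \eqref{regularity}, positive integer $s$ via the Leibniz identity \eqref{Cut1} and induction followed by interpolation, and negative $s$ by duality through the symmetry of $\mathbf G$ and the density argument \eqref{3}. The only addition is your explicit attention to how analyticity survives interpolation and the duality passage (via local uniform boundedness plus Morera/Vitali), a point the paper leaves implicit.
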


In the following we consider the resolvent of the Maxwell equation \eqref{ef} in an inhomogeneous medium
\[
R(\lambda) = (c^2(x)\nabla\times(\nabla\times\cdot)-\lambda^2)^{-1}.
\]
Denote $\frac{1}{n(x)} = c^2(x)$. We assume that the medium satisfies the following non-trapping condition.
\begin{assumption}\label{ntrap}
Let $\mathcal E(t)$ solve
the wave equation
\[
\square \mathcal E(t)=0,\quad \mathcal E(0)=0,\quad \partial_t \mathcal E(0)=\mathbb I,
\]
where $\square:=\partial^2_t+c^2(x)\nabla\times(\nabla\times\cdot)$.
For any $a>R$, $\exists \, T_a>0$ such that for each $\chi\in C_0^\infty
(B_a), \chi|_{B_{R+\tau}}=1$ with $\tau$  being a small constant, it holds that 
\[
\chi \mathcal E(t)\chi|_{t>T_a} \in C^\infty ((T_a, \infty);\mathcal
L(L^2(\mathbb R^3)^3, L^2(\mathbb R^3)^3)).
\]
\end{assumption}

The following proposition concerns the analyticity and estimate of the resolvent $R(\lambda)$. The proof adapts the arguments in \cite[Theorem 4.43]{DZ}.
\begin{proposition}\label{prop_resolvent}
For any $M>0$, there exists $C_0>0$ such that $R(\lambda)$ is holomorphic
in the domain
\[
\Omega_M=\{\lambda\in\mathbb{C}:\,\mathrm{Im}\lambda\geq -M\log|\lambda|,\quad |\lambda|\geq C_0\}.
\]
Assume $\chi\in C_c^\infty(\mathbb{R}^3)$ such that $\chi=1$ near $B_R$, the
following resolvent estimate holds:
\begin{equation}\label{stability_resolvent}
\|\chi R(\lambda)\chi\|_{L^2(\mathbb R^3)^3\to L^2(\mathbb R^3)^3}\leq
\frac{C}{|\lambda|}e^{T({\rm Im}\lambda)_-},
\end{equation}
for $\lambda\in \Omega_M$, where $C, T$ are positive constants and $({\rm
Im}\lambda)_-:=\max(0,-{\rm Im}\lambda)$. 
\end{proposition}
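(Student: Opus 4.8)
The plan is to reduce the resolvent $R(\lambda)$ of the inhomogeneous operator to the free resolvent $R_0(\lambda)$ from Proposition~\ref{free} by a perturbation/gluing argument, and then transfer the non-trapping hypothesis (Assumption~\ref{ntrap}) into a holomorphic continuation with exponential bounds, exactly in the spirit of \cite[Theorem~4.43]{DZ}. First I would fix $a>R$ and a cutoff $\chi\in C_c^\infty(B_a)$ with $\chi=1$ near $B_R$, together with an intermediate cutoff $\psi\in C_c^\infty(\mathbb R^3)$ satisfying $\psi=1$ on $\mathrm{supp}\,\chi$ and $\psi=1$ near $B_R$. Since $m=1-n$ (equivalently $1-c^{-2}$) is supported in $B_R$, the operators $\square$ and $\square_0$ agree outside $B_R$, so one can build a parametrix for $R(\lambda)$ of the form
\[
G(\lambda) \;=\; \chi_1 R(\lambda)_{\mathrm{loc}}\,\chi_2 \;+\; (1-\chi_2)\,R_0(\lambda)\,(1-\chi_3),
\]
where $R(\lambda)_{\mathrm{loc}}$ is the resolvent of a suitable reference problem on a large ball (with, e.g., a transparent or absorbing boundary condition) and $\chi_1,\chi_2,\chi_3$ are nested cutoffs. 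Applying $c^2\nabla\times\nabla\times\cdot-\lambda^2$ to $G(\lambda)$ produces the identity plus an error term supported in the annular region where the cutoffs have their gradients, which is where $\square=\square_0$; this error is a compact (indeed smoothing in the relevant variable) operator, and for $\mathrm{Im}\,\lambda$ large it has small norm, so $I+E(\lambda)$ is invertible and $R(\lambda)=G(\lambda)(I+E(\lambda))^{-1}$.

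Next I would represent the local piece via the wave propagator $\mathcal E(t)$ using the same Laplace-transform device already used for the free resolvent in \eqref{fre}, namely
\[
\chi R(\lambda)\chi \;=\; \int_0^\infty e^{{\rm i}\lambda t}\,\chi\,\mathcal E(t)\,\chi\,\mathrm dt,
\]
valid initially for $\mathrm{Im}\,\lambda$ large. Splitting the integral at $t=T_a$, the contribution from $t\in(0,T_a)$ is entire in $\lambda$ and, because the integration interval is finite, is bounded by $\frac{C}{|\lambda|}e^{T_a(\mathrm{Im}\,\lambda)_-}$ after one integration by parts in $t$ (using the finite-speed-of-propagation regularity of $\mathcal E(t)$, the analogue of \eqref{regularity}). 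The contribution from $t>T_a$ is where Assumption~\ref{ntrap} enters: there $\chi\mathcal E(t)\chi$ is smooth with values in $\mathcal L(L^2,L^2)$, and the standard non-trapping estimate gives that this tail, after subtracting finitely many terms, extends holomorphically into a logarithmic neighborhood $\Omega_M$ of the real axis with a bound that loses only polynomially in $|\lambda|$; combined with the $1/|\lambda|$ gain this yields \eqref{stability_resolvent}. The $L^2\to L^2$ bound on $\chi R(\lambda)\chi$ then upgrades to the parametrix $G(\lambda)$ because the second summand involves only $\chi R_0(\lambda)\chi$-type terms controlled by Proposition~\ref{free}, and $\|(I+E(\lambda))^{-1}\|=O(1)$ in $\Omega_M$ after enlarging $C_0$.

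The main obstacle I expect is the holomorphic continuation of the tail $\int_{T_a}^\infty e^{{\rm i}\lambda t}\chi\mathcal E(t)\chi\,\mathrm dt$ across the real axis: a priori this integral only converges for $\mathrm{Im}\,\lambda>0$, and pushing it down to $\mathrm{Im}\,\lambda\geq -M\log|\lambda|$ requires quantitative decay of $\chi\mathcal E(t)\chi$ (or of its derivatives in $t$) as $t\to\infty$, which is precisely the content of the non-trapping condition but must be extracted with explicit constants; this is where the argument of \cite[Theorem~4.43]{DZ} does the real work, via a contour-deformation / complex-scaling argument and control of the cutoff commutators. A secondary technical point is verifying that the gluing error $E(\lambda)$ is genuinely small (not merely compact) for $\mathrm{Im}\,\lambda$ large — this follows from the $1/|\lambda|$ decay in Proposition~\ref{free} together with the fact that the error is supported away from $B_R$ and involves at least one derivative falling on a cutoff, but it needs to be stated carefully to start the Neumann series.
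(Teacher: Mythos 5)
Your proposal follows essentially the same route as the paper: the paper's entire proof of Proposition~\ref{prop_resolvent} is the single remark that it adapts the arguments of \cite[Theorem 4.43]{DZ}, and your sketch --- the wave-propagator representation $\chi R(\lambda)\chi=\int_0^\infty e^{{\rm i}\lambda t}\chi\mathcal E(t)\chi\,{\rm d}t$, the splitting at $t=T_a$, the use of Assumption~\ref{ntrap} for the smooth tail, and the gluing with the free resolvent controlled by Proposition~\ref{free} --- is precisely the structure of that argument. The technical points you flag as open (the holomorphic continuation of the tail integral into $\Omega_M$ and the smallness of the gluing error) are exactly the steps the paper itself leaves entirely to the citation, so your outline is at least as detailed as the paper's own treatment.
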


Let $\mathbf G_n(x, y)$ be the Green tensor of the Maxwell equations in inhomogeneous medium which satisfies
\[
\nabla\times(\nabla\times\mathbf G_n)-{k}^2 n \mathbf G_n=\mathbf I_3 \delta(x-y).
\]
It can be decomposed as follows
\[
\mathbf G_n(x, y) = \mathbf G(x, y) + \mathbf P(x, y).
\]
Here $\mathbf P(x, y)$ is a smooth matrix with $\mathbf P(x, y) = \mathbf P(y, x). $
Using Proposition \ref{prop_resolvent} and repeating the arguments for the proof of Proposition \ref{free}, we obtain the following theorem
for the resolvent of the Maxwell equation \eqref{ef} in an inhomogeneous medium.

\begin{theorem}\label{dp}
For any $M>0$, there exists $C_0>0$ such that $R(\lambda)$ is holomorphic
in the domain
\[
\Omega_M=\{\lambda\in\mathbb{C}:\,\mathrm{Im}\lambda\geq -M\log|\lambda|,\quad |\lambda|\geq C_0\}.
\]
Assume $\chi\in C_c^\infty(\mathbb{R}^3)$ such that $\chi=1$ near $B_R$. The
following resolvent estimates hold:
\begin{equation}\label{stability_resolvent}
\|\chi R(\lambda)\chi\|_{H^s(\mathbb R^3)^3\to H^s(\mathbb R^3)^3}\leq
\frac{C}{|\lambda|}e^{T({\rm Im}\lambda)_-},
\end{equation}
for $\lambda\in \Omega_M$, where $C, T$ are constants and $({\rm
Im}\lambda)_-:=\max(0,-{\rm Im}\lambda)$. 
\end{theorem}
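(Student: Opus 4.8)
The plan is to upgrade the $L^2\to L^2$ resolvent estimate of Proposition~\ref{prop_resolvent} to an $H^s\to H^s$ estimate by exactly the bootstrapping-and-interpolation device already used to go from Proposition~\ref{free} (the $L^2$ free resolvent bound) to the $H^s$ free resolvent bound, but now carried out with the perturbed Green tensor $\mathbf G_n = \mathbf G + \mathbf P$. The starting point is the representation $\chi R(\lambda)\chi(\boldsymbol f) = \chi(\mathbf G_n * (\chi\boldsymbol f))$, valid on $L^2_{comp}$ by \eqref{ie} applied to $\mathbf G$ plus the convolution with the smooth kernel $\mathbf P$; holomorphy of $R(\lambda)$ in $\Omega_M$ is inherited verbatim from Proposition~\ref{prop_resolvent}, since $\Omega_M$ and $C_0$ are unchanged.

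First I would treat positive integer $s=n$. For $|\alpha|=1$ and $\boldsymbol f\in H^1(\mathbb R^3)^3$, differentiate $\chi(\mathbf G_n *(\chi\boldsymbol f))$ by the product rule and move one derivative onto $\chi\boldsymbol f$ inside the convolution, exactly as in \eqref{Cut1}:
\begin{align*}
\partial^\alpha\big(\chi R(\lambda)\chi\boldsymbol f\big)
=(\partial^\alpha\chi)\big(\mathbf G_n *(\chi\boldsymbol f)\big)
+\chi\big(\mathbf G_n*(\partial^\alpha\chi)\boldsymbol f\big)
+\chi\big(\mathbf G_n*(\chi\,\partial^\alpha\boldsymbol f)\big).
\end{align*}
Each of the three terms is of the form (smooth compactly supported cutoff)$\times R(\lambda)\times$(smooth compactly supported cutoff) applied to an $L^2$ function — after enlarging the cutoffs slightly so that they all equal $1$ near $B_R$ and are supported in a fixed large ball, which is harmless because $\mathbf P$ is globally smooth and $\mathbf G$ has the convolution structure of \eqref{ie}. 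Hence the $L^2\to L^2$ bound \eqref{stability_resolvent} of Proposition~\ref{prop_resolvent} applies to each term, giving $\|\chi R(\lambda)\chi\boldsymbol f\|_{H^1}\lesssim |\lambda|^{-1}e^{T({\rm Im}\lambda)_-}\|\boldsymbol f\|_{H^1}$. Iterating this over $|\alpha|\le n$ by induction yields the $H^n\to H^n$ bound with the same right-hand side, and then interpolation $H^s=[L^2,H^n]_{s/n}$ gives the estimate for all real $s\ge 0$.

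For $s<0$ I would dualize, again mirroring the argument after \eqref{3}: using the symmetry $\mathbf G_n(x,y)=\mathbf G_n(y,x)$ (which holds because both $\mathbf G(x,y)=\mathbf G(y,x)$ and $\mathbf P(x,y)=\mathbf P(y,x)$), one checks the self-adjointness-type identity $\langle \chi R(\lambda)\chi\boldsymbol f,\boldsymbol\varphi\rangle = \langle\boldsymbol f,\chi R(\lambda)\chi\boldsymbol\varphi\rangle$ first for $\boldsymbol\varphi\in C_0^\infty$, then for $\boldsymbol\varphi\in H^{-s}$ by density. Testing against such $\boldsymbol\varphi$ and using the already-established $H^{-s}\to H^{-s}$ bound (since $-s>0$) on $\chi R(\lambda)\chi\boldsymbol\varphi$, the pairing is controlled by $|\lambda|^{-1}e^{T({\rm Im}\lambda)_-}\|\boldsymbol f\|_{H^s}\|\boldsymbol\varphi\|_{H^{-s}}$, which is precisely the $H^s\to H^s$ estimate \eqref{stability_resolvent} for $s<0$. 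Combining the three ranges $s>0$, $s=0$, $s<0$ completes the proof.

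The only genuinely delicate point — and where I would spend the care in writing — is the cutoff bookkeeping in the integration-by-parts step: the terms $(\partial^\alpha\chi)(\mathbf G_n*(\chi\boldsymbol f))$ and $\chi(\mathbf G_n*(\partial^\alpha\chi)\boldsymbol f)$ involve cutoffs whose supports need not contain $B_R$, so one must first note that they are still of the form $\chi_1 R(\lambda)\chi_2$ with $\chi_1,\chi_2\in C_0^\infty$ and $\chi_2$ can be absorbed into a larger cutoff that is $1$ near $B_R$ (this uses that outside $B_R$ the medium is free, so $\mathbf P$ contributes only a smooth smoothing operator and the estimate localizes). This is routine but must be stated, exactly as the analogous point was handled implicitly in passing from \eqref{Cut1} to the conclusion for the free resolvent; everything else is a verbatim repetition of the arguments establishing Propositions~\ref{free} and~\ref{prop_resolvent}.
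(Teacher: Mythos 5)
Your proposal is correct and follows essentially the same route as the paper, which simply invokes Proposition~\ref{prop_resolvent} together with a repetition of the arguments used for Proposition~\ref{free} (differentiation of the kernel representation $\mathbf G_n=\mathbf G+\mathbf P$ for integer $s$, interpolation for fractional $s\ge 0$, and duality via the symmetry $\mathbf G_n(x,y)=\mathbf G_n(y,x)$ for $s<0$). Your write-up is in fact more explicit than the paper's one-line justification, and the cutoff bookkeeping you flag is the right point to be careful about.
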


As a corollary  of Theorem \ref{dp}, we obtain the well-posedness of the direct problem.
\begin{theorem}
Assume the medium is non-trapping. The direct scattering problem \eqref{ef}--\eqref{src} admits a unique solution $\boldsymbol E\in H_{loc}^{-3/2-\delta}(\mathbb R^3)^3$ almost surely
with the estimate
\[
\|\boldsymbol E\|_{H_{loc}^{-3/2-\delta}(\mathbb R^3)^3}\lesssim \|\boldsymbol J\|_{H^{-3/2-\delta}(\mathbb R^3)^3}.
\]
\end{theorem}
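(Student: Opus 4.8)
The plan is to use Theorem \ref{dp} to define the solution operator and then combine it with the regularity of white noise to obtain the pathwise estimate. First I would fix the frequency $\lambda = k$ and recall that, by Assumption \ref{ntrap}, the non-trapping condition guarantees $k$ lies in the domain $\Omega_M$ after enlarging $C_0$ if necessary, or more precisely that $R(k)$ extends holomorphically to a neighborhood of the real axis so that $\chi R(k) \chi$ is a bounded operator on $H^s(\mathbb R^3)^3$ for every $s \in \mathbb R$ with the estimate from \eqref{stability_resolvent}. Applying this with $s = -3/2 - \delta$, I obtain that $\boldsymbol E := {\rm i} k\, R(k)(\boldsymbol J)$ is well-defined in $H^s_{loc}(\mathbb R^3)^3$ whenever $\boldsymbol J \in H^{-3/2-\delta}_{comp}(\mathbb R^3)^3$, and it solves \eqref{ef}; the Silver–Müller radiation condition \eqref{src} is encoded in the choice of the outgoing resolvent $R(k)$ (built from the forward propagator $\mathcal E(t)$), which also gives uniqueness.

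The second ingredient is that the additive white noise $\boldsymbol J = \sqrt{\sigma(x)}\,\dot{\boldsymbol W}_x$, with $\sigma \in C_0^\infty(B_R)$, almost surely belongs to $H^{-3/2-\delta}(\mathbb R^3)^3$ for any $\delta > 0$. This is the classical Sobolev regularity of spatial white noise in $\mathbb R^3$: the threshold $-3/2$ is dictated by the space dimension (white noise lies in $H^{-d/2-\delta}$ a.s.), and multiplication by the smooth compactly supported factor $\sqrt{\sigma}$ preserves this regularity and makes the noise compactly supported in $B_R$. I would state this as a lemma (or quote the corresponding statement to be proved in Section \ref{ip}), with the quantitative bound $\mathbb E\,\|\boldsymbol J\|_{H^{-3/2-\delta}(\mathbb R^3)^3}^2 < \infty$ obtained by expanding $\dot{\boldsymbol W}_x$ in a Fourier/wavelet basis and summing $\langle \xi \rangle^{-3-2\delta}$.

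Putting the two together, for almost every $\omega \in \Omega$ the realization $\boldsymbol J(\cdot,\omega)$ is a fixed element of $H^{-3/2-\delta}_{comp}(\mathbb R^3)^3$, so the deterministic solution operator applies pathwise and yields the unique solution $\boldsymbol E(\cdot,\omega) \in H^{-3/2-\delta}_{loc}(\mathbb R^3)^3$ together with
\[
\|\boldsymbol E\|_{H^{-3/2-\delta}_{loc}(\mathbb R^3)^3} = \|{\rm i}k\, R(k)(\boldsymbol J)\|_{H^{-3/2-\delta}_{loc}(\mathbb R^3)^3} \lesssim \|\boldsymbol J\|_{H^{-3/2-\delta}(\mathbb R^3)^3},
\]
where the implied constant comes from $k \|\chi R(k) \chi\|_{\mathcal L(H^s,H^s)}$ with a cutoff $\chi$ equal to $1$ near $B_R$ and near the (fixed, bounded) region over which the $H_{loc}$-norm is measured. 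I expect the main subtlety to be bookkeeping the cutoffs: the resolvent estimate is only available for $\chi R(\lambda) \chi$ with $\chi$ compactly supported, so one has to check that $\boldsymbol J$ being supported in $B_R$ lets us insert the right $\chi$ on the source side, and that the $H_{loc}$ statement for $\boldsymbol E$ is exactly what the localized operator delivers; the measurability of $\omega \mapsto \boldsymbol E(\cdot,\omega)$ as an $H^{-3/2-\delta}_{loc}$-valued random variable follows since it is the image of the measurable map $\omega \mapsto \boldsymbol J(\cdot,\omega)$ under a fixed bounded linear operator.
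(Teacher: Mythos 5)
Your proposal is correct and follows essentially the same route as the paper: the a.s.\ regularity $\boldsymbol J\in H^{-3/2-\delta}(\mathbb R^3)^3$ of the white noise, applied pathwise to the resolvent bound of Theorem \ref{dp} at $s=-3/2-\delta$, gives existence and the estimate, while uniqueness comes from the deterministic theory (the paper cites the uniqueness result of Colton--Kress rather than phrasing it via the outgoing resolvent, but this is the same mechanism). Your additional remarks on cutoff bookkeeping and measurability are sensible elaborations of what the paper leaves implicit.
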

\begin{proof}
It is known that $\boldsymbol J\in H^{-3/2-\delta}(\mathbb R^3)^3$ where $\delta$ is any positive constant by the regularity of the white noise.
Then the existence and estimate of the solution is a direct consequence of Theorem \ref{dp}. The uniqueness is obtained by utilizing the uniqueness result
established in \cite{CK} for the deterministic counterpart.  
\end{proof}

\section{Inverse random source problem}\label{ip}

In this section, we study the inverse random source problem. The goal is to derive a stability estimate of determining $\sigma$ from the tangential trace of the electric
field on $\partial B_R$.

Let $\boldsymbol E\times\boldsymbol\nu$ and $\boldsymbol H\times\boldsymbol\nu$
be the tangential trace of the electric field and the magnetic field,
respectively. It is shown in \cite{CK}  that there exists a capacity
operator $T_{\rm M}$ such that
\begin{equation}\label{mtbc1}
 \boldsymbol H\times\boldsymbol\nu=T_{\rm M}(\boldsymbol
E\times\boldsymbol\nu)\quad\text{on}~\partial B_R,
\end{equation}
which implies that $\boldsymbol H\times\boldsymbol\nu$ can be computed once
$\boldsymbol E\times\boldsymbol\nu$ is available on $\partial B_R$. The transparent
boundary condition \eqref{mtbc1} can be equivalently written as
\begin{equation}\label{mtbc2}
(\nabla\times\boldsymbol E)\times\boldsymbol\nu={\rm i}k T_{\rm M}
(\boldsymbol E\times\boldsymbol\nu)\quad\text{on}~\partial B_R.
\end{equation}

In what follows,
we establish an integral equation connecting $\sigma$ and the Dirichlet boundary data.
Denote $\boldsymbol f = {\rm i}k \boldsymbol J$ and let $\boldsymbol f_\varepsilon$ be the mollifier of $\boldsymbol f$. Since $\boldsymbol f\in H^{-3/2-\delta}(\mathbb R^3)^3$ with compact support, $\boldsymbol f_\varepsilon$ is smooth and $\boldsymbol f_\varepsilon\to \boldsymbol f$ in $H^{-3/2-\delta}(\mathbb R^3)^3$.
For each $\boldsymbol f_\varepsilon$, there exists a unique radiating solution $\boldsymbol E_\varepsilon$ to \eqref{ef} such that
\begin{equation}\label{eqn_1}
\nabla\times(\nabla\times\boldsymbol E_\varepsilon)-{k}^2 n \boldsymbol E_\varepsilon=\boldsymbol f_\varepsilon\quad\text{in}~ \mathbb R^3.
\end{equation}

Let $\boldsymbol U$ satisfy the homogeneous Maxwell equation
\[
 \nabla\times(\nabla\times\boldsymbol U)-{k}^2 n \boldsymbol U=0\quad\text{in}~ \mathbb R^3.
\]
Multiplying both sides of the equation by $\boldsymbol U$ and integrating by parts yields
\[
\int_{B_R}\boldsymbol f_\varepsilon \cdot \boldsymbol U dx = 
-\int_{\partial B_R} {\rm i}kT_{\rm M} (\boldsymbol E_\varepsilon\times\boldsymbol\nu)\cdot \boldsymbol U + (\boldsymbol E_\varepsilon\times\boldsymbol\nu)\cdot (\nabla\times\boldsymbol U)  {\rm d}s(x).
\]
Then by letting $\varepsilon\to 0$ we obtain
\begin{align}\label{1}
\lim_{\varepsilon\to 0}\int_{B_R}\boldsymbol f_\varepsilon \cdot\boldsymbol U dx = \int_{B_R}\boldsymbol f \cdot\boldsymbol U dx.
\end{align}
We also have
\begin{align}\label{2}
&\lim_{\varepsilon\to 0}
\int_{\partial B_R} {\rm i}kT_{\rm M} (\boldsymbol E_\varepsilon\times\boldsymbol\nu)\cdot \boldsymbol U + (\boldsymbol E_\varepsilon\times\boldsymbol\nu)\cdot (\nabla\times\boldsymbol U)  {\rm d}s(x)\notag\\
&= \int_{\partial B_R} {\rm i}kT_{\rm M} (\boldsymbol E\times\boldsymbol\nu)\cdot \boldsymbol U + (\boldsymbol E\times\boldsymbol\nu)\cdot (\nabla\times\boldsymbol U)  {\rm d}s(x).
\end{align}
To prove the above limit, note that
\begin{align*}
&\Big|\int_{\partial B_R} T_{\rm M} (\boldsymbol E_\varepsilon\times\boldsymbol\nu)\cdot \boldsymbol U - T_{\rm M} (\boldsymbol E\times\boldsymbol\nu)\cdot \boldsymbol U {\rm d}s(x)\Big|\\
&\leq \|T_{\rm M} (\boldsymbol E_\varepsilon\times\boldsymbol\nu) - T_{\rm M} (\boldsymbol E\times\boldsymbol\nu)\|_{L^2(\partial B_R)^3}\|\boldsymbol U\|_{L^2(\partial B_R)^3}\\
&\leq 
\|\boldsymbol E_\varepsilon -  \boldsymbol E\|_{H^{3/2}(\Omega)^3}\|\boldsymbol U\|_{H^{3/2}(\partial B_R)^3}.
\end{align*}
Here $\Omega$ is a bounded domain satisfying $\partial B_R\subset \Omega$ and $\Omega\cap supp\boldsymbol J=\emptyset$.
Because near $\partial B_R$ the Maxwell equation reduces to the Helmholtz equation, which implies
\[
-\Delta (\boldsymbol E_\varepsilon - \boldsymbol E) - k^2 (\boldsymbol E_\varepsilon - \boldsymbol E) =0,
\]
from the resolvent estimate in Corollary \ref{dp} by letting $s=-3/2-\delta$ and interior elliptic regularity theory \cite[(7.13)]{Stein}, we have for any $m\in\mathbb N$
\begin{align*}
&\|\boldsymbol E_\varepsilon -  \boldsymbol E\|_{H^{-3/2-\delta+2m}(\Omega)^3}\\
&\leq C(m) \|\boldsymbol E_\varepsilon-\boldsymbol E\|_{H^{-3/2-\delta}(\Omega)^3}\\
&\leq C(m) \|\boldsymbol f_\varepsilon-\boldsymbol f\|_{H^{-3/2-\delta}(B_R)^3} \to 0 \,\,\text{as} \,\,\varepsilon\to 0,
\end{align*}
which gives the following limit by letting $m=2$
\[
\lim_{\varepsilon\to 0}\int_{\partial B_R} T_M (\boldsymbol E_\varepsilon\times\boldsymbol\nu)\cdot \boldsymbol U {\rm d}s(x) = \int_{\partial B_R} T_M (\boldsymbol E\times\boldsymbol\nu)\cdot\boldsymbol U {\rm d}s(x).
\]
In a similar way we have
\[
\lim_{\varepsilon\to 0}\int_{\partial B_R} (\boldsymbol E_\varepsilon\times\boldsymbol\nu)\cdot (\nabla\times\boldsymbol U)  {\rm d}s(x) = \int_{\partial B_R} (\boldsymbol E\times\boldsymbol\nu)\cdot (\nabla\times\boldsymbol U)  {\rm d}s(x),
\]
which yields \eqref{2}.
Combining \eqref{1} and \eqref{2} we establish the following integral equation
\begin{align}\label{ibp}
\int_{B_R}\boldsymbol f \cdot\boldsymbol U dx = \int_{\partial B_R} {\rm i}kT_{\rm M} (\boldsymbol E\times\boldsymbol\nu)\cdot \boldsymbol U + (\boldsymbol E\times\boldsymbol\nu)\cdot (\nabla\times\boldsymbol U)  {\rm d}s(x). 
\end{align}

Based on the integral identity \eqref{ibp}, we obtain the following lemma which plays a crucial role in the stability estimate. It establishes an estimate of the strength of the random source by the measurement of the electric field.
\begin{lemma}
For two smooth solutions $\boldsymbol U_1$ and $\boldsymbol U_2$ to the Maxwell equations \begin{align} \label{Maxw}
    \nabla \times (\nabla \times \boldsymbol U_j)-k^2n \boldsymbol U_j=0,\quad j=1,2,\end{align} 
there exists a positive constant $C$ dependent on $k$ and $R$ such that
\begin{align}
   \left| \int_{\mathbb R^3}\sigma \boldsymbol U_1^{\top} \boldsymbol U_2{\rm d}x \right| 
   \leq C\epsilon \|\boldsymbol U_1\|_{L^2( B_{R'})^3}\|\boldsymbol U_2\|_{L^2(B_{R'})^3}
, \label{key}
\end{align} where $R'>R$ and the boundary data $\epsilon$ is given by \[
\epsilon= \max\{\|\mathbb F_1\|_{\partial B_R \times \partial B_R},\|\mathbb F_2\|_{\partial B_R \times \partial B_R},\|\mathbb F_3\|_{\partial B_R \times \partial B_R}\}
\] with the norm $\|\cdot\|_{\partial B_R \times \partial B_R}$ for matrix-valued functions defined as \[
\|\mathbb A\|_{\partial B_R \times \partial B_R}:= \sum_{i,j=1}^3 \|A_{ij}\|_{L^2(\partial B_R \times \partial B_R)}.
\] 
Here
\begin{align*}
    \mathbb F_1(x,y) &:=\mathbb E[(\boldsymbol{E}(x) \times \boldsymbol\nu(x))( \boldsymbol{E}(y) \times \boldsymbol\nu(y))^\top],\\ 
    \mathbb F_2(x,y) &:= \mathbb E[ T_{\rm M}
(\boldsymbol E(x)\times\boldsymbol\nu(x))( \boldsymbol{E}(y) \times \boldsymbol\nu(y))^\top], \\
    \mathbb F_3(x,y) &:=\mathbb E[ T_{\rm M}
(\boldsymbol E(x)\times\boldsymbol\nu(x)) T_{\rm M}
(\boldsymbol E(x)\times\boldsymbol\nu(x))^\top].
\end{align*} 
\end{lemma}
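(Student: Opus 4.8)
The plan is to combine the integral identity \eqref{ibp} with the Itô isometry. First I would apply \eqref{ibp} twice, once with $\boldsymbol U=\boldsymbol U_1$ and once with $\boldsymbol U=\boldsymbol U_2$. Writing $\boldsymbol f={\rm i}k\boldsymbol J$ and
\[
X_j:=\int_{B_R}\boldsymbol f\cdot\boldsymbol U_j\,{\rm d}x={\rm i}k\sum_{\ell=1}^3\int_{B_R}\sqrt{\sigma(x)}\,U_{j,\ell}(x)\,{\rm d}W_\ell(x),\qquad j=1,2,
\]
where the rightmost expression is a Wiener integral, I would first check that the distributional pairing $\int_{B_R}\boldsymbol f\cdot\boldsymbol U_j\,{\rm d}x$ indeed equals this stochastic integral; this follows by letting $\varepsilon\to0$ in $\int_{B_R}\boldsymbol f_\varepsilon\cdot\boldsymbol U_j\,{\rm d}x$ (the mollified source was already introduced above), using the Itô isometry to control the error in $L^2(\Omega)$. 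Since $\dot W_1,\dot W_2,\dot W_3$ are independent one-dimensional white noises and $\boldsymbol U_1,\boldsymbol U_2$ are deterministic, $X_1,X_2$ are jointly Gaussian, $X_1X_2\in L^1(\Omega)$, and the Itô isometry yields
\[
\mathbb E[X_1X_2]=-k^2\sum_{\ell=1}^3\int_{B_R}\sigma\,U_{1,\ell}U_{2,\ell}\,{\rm d}x=-k^2\int_{\mathbb R^3}\sigma\,\boldsymbol U_1^{\top}\boldsymbol U_2\,{\rm d}x.
\]
Hence it suffices to bound $|\mathbb E[X_1X_2]|$ by $\epsilon\|\boldsymbol U_1\|_{L^2(B_{R'})^3}\|\boldsymbol U_2\|_{L^2(B_{R'})^3}$ up to a constant.

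Next I would use the boundary side of \eqref{ibp}, namely
\[
X_j=\int_{\partial B_R}{\rm i}k\,T_{\rm M}(\boldsymbol E\times\boldsymbol\nu)\cdot\boldsymbol U_j+(\boldsymbol E\times\boldsymbol\nu)\cdot(\nabla\times\boldsymbol U_j)\,{\rm d}s(x).
\]
Recall that near $\partial B_R$ there is no source, so $\boldsymbol E$ is smooth there and the traces $\boldsymbol E\times\boldsymbol\nu$ and $T_{\rm M}(\boldsymbol E\times\boldsymbol\nu)$ lie in $L^2(\partial B_R)^3$ almost surely, which makes $\mathbb F_1,\mathbb F_2,\mathbb F_3$ well defined in $L^2(\partial B_R\times\partial B_R)$. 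Multiplying the two expressions for $X_1$ and $X_2$, interchanging $\mathbb E$ with the double surface integral (stochastic Fubini), and using that for deterministic vectors $\boldsymbol a,\boldsymbol b$
\[
\mathbb E\bigl[(\boldsymbol E(x)\times\boldsymbol\nu(x))\cdot\boldsymbol a\,\,(\boldsymbol E(y)\times\boldsymbol\nu(y))\cdot\boldsymbol b\bigr]=\boldsymbol a^{\top}\mathbb F_1(x,y)\boldsymbol b,
\]
and analogously for the terms carrying one or two copies of $T_{\rm M}$, I would express $\mathbb E[X_1X_2]$ as a sum of four double integrals over $\partial B_R\times\partial B_R$ of the kernels $\mathbb F_1,\mathbb F_2,\mathbb F_3$ tested against $\boldsymbol U_1,\boldsymbol U_2$ and $\nabla\times\boldsymbol U_1,\nabla\times\boldsymbol U_2$. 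Applying the Cauchy--Schwarz inequality on $\partial B_R\times\partial B_R$ together with the definition of $\|\cdot\|_{\partial B_R\times\partial B_R}$ to each term gives
\[
|\mathbb E[X_1X_2]|\le C(k)\,\epsilon\bigl(\|\boldsymbol U_1\|_{L^2(\partial B_R)^3}+\|\nabla\times\boldsymbol U_1\|_{L^2(\partial B_R)^3}\bigr)\bigl(\|\boldsymbol U_2\|_{L^2(\partial B_R)^3}+\|\nabla\times\boldsymbol U_2\|_{L^2(\partial B_R)^3}\bigr).
\]

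Finally I would convert the boundary norms into interior $L^2$ norms over $B_{R'}$. Since $m=1-n\in C_0^\infty(B_R)$, there is $\eta>0$ with $n\equiv1$ on the shell $\{R-\eta<|x|<R'\}$; there $\boldsymbol U_j$ solves $\nabla\times(\nabla\times\boldsymbol U_j)-k^2\boldsymbol U_j=0$, and taking the divergence forces $\nabla\cdot\boldsymbol U_j=0$, so $-\Delta\boldsymbol U_j-k^2\boldsymbol U_j=0$ on that shell. Interior elliptic regularity then gives $\|\boldsymbol U_j\|_{H^2(\{R-\eta/2<|x|<R''\})^3}\lesssim\|\boldsymbol U_j\|_{L^2(B_{R'})^3}$ for some $R<R''<R'$, and the trace theorem bounds $\|\boldsymbol U_j\|_{L^2(\partial B_R)^3}$ and $\|\nabla\times\boldsymbol U_j\|_{L^2(\partial B_R)^3}$ by this $H^2$ norm. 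Chaining the three displayed estimates yields $\bigl|\int_{\mathbb R^3}\sigma\,\boldsymbol U_1^{\top}\boldsymbol U_2\,{\rm d}x\bigr|=k^{-2}|\mathbb E[X_1X_2]|\le C\epsilon\|\boldsymbol U_1\|_{L^2(B_{R'})^3}\|\boldsymbol U_2\|_{L^2(B_{R'})^3}$, which is \eqref{key}. The hard part will be the rigorous passage from the distributional identity \eqref{ibp} to the stochastic one — i.e. justifying that $\int_{B_R}\boldsymbol f\cdot\boldsymbol U_j\,{\rm d}x$ is the Wiener integral above and that \eqref{ibp} survives multiplication and taking expectation (the stochastic Fubini and the $L^2(\Omega)$-convergence of the mollified quantities); the remaining steps are Cauchy--Schwarz and standard interior elliptic estimates.
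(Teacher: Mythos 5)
Your proposal is correct and follows essentially the same route as the paper's proof: Itô isometry to turn $\int\sigma\,\boldsymbol U_1^{\top}\boldsymbol U_2\,{\rm d}x$ into $\mathbb E[X_1X_2]$, the identity \eqref{ibp} to rewrite each $X_j$ as a boundary integral, Cauchy--Schwarz on $\partial B_R\times\partial B_R$ against $\mathbb F_1,\mathbb F_2,\mathbb F_3$, and then the trace theorem plus interior elliptic regularity (using that the equation reduces to the Helmholtz equation near $\partial B_R$) to pass from boundary norms to $\|\boldsymbol U_j\|_{L^2(B_{R'})^3}$. The extra care you take in identifying the distributional pairing with a Wiener integral and tracking the factor $({\rm i}k)^2$ (which the paper absorbs into $C$) only tightens the argument.
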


\begin{proof}

Using the Ito isometry we obtain
\begin{align}\label{Iso}
\mathbb E\Big[\int_{\mathbb R^3}\boldsymbol J\cdot \boldsymbol U_1{\rm d}x\int_{\mathbb R^3}\boldsymbol J\cdot \boldsymbol U_2{\rm d}x\Big]
=\int_{\mathbb R^3} \sigma\boldsymbol U_1^{\top}\boldsymbol U_2{\rm d}x.
\end{align}
On the other hand, by \eqref{ibp} we have
\begin{align}\label{IBP}
{\rm i}k\int_{B_R}\boldsymbol J\cdot\boldsymbol U_j dx = \int_{\partial B_R} {\rm i}kT_{\rm M} (\boldsymbol E\times\boldsymbol\nu)\cdot \boldsymbol U_j + (\boldsymbol E\times\boldsymbol\nu)\cdot (\nabla\times\boldsymbol U_j)  {\rm d}s(x), \quad j=1, 2. 
\end{align}
Then combining \eqref{Iso} and \eqref{IBP} we obtain \begin{align*}
    &\int_{\mathbb R^3} \sigma\boldsymbol U_1^{\top}\boldsymbol U_2{\rm d}x\\
    &=\mathbb E \Big{[}\int_{\partial B_R} (\nabla \times \boldsymbol U_1) \cdot (\boldsymbol{E} \times \boldsymbol\nu)+T_{\rm M} (\boldsymbol E\times\boldsymbol\nu) \cdot \boldsymbol U_1\,\mathrm{d}s(x) \\ 
    &\quad\times\int_{\partial B_R} (\nabla \times \boldsymbol U_2) \cdot (\boldsymbol{E} \times \boldsymbol\nu)+T_{\rm M} (\boldsymbol E\times\boldsymbol\nu) \cdot \boldsymbol U_2\,\mathrm{d}s(y)\Big{]} \\ 
    &=\int_{\partial B_R}\int_{\partial B_R} \mathbb E[(\boldsymbol{E}(x) \times \boldsymbol\nu(x))( \boldsymbol{E}(y) \times \boldsymbol\nu(y))^\top] \otimes((\nabla \times \boldsymbol U_1(x))(\nabla \times \boldsymbol U_2(y))^{\top})\,\mathrm{d}s(x)\,\mathrm{d}s(y)\\ 
    &\quad+\int_{\partial B_R}\int_{\partial B_R} \mathbb E[T_{\rm M} (\boldsymbol E(x)\times\boldsymbol\nu(x))( \boldsymbol{E}(y) \times \boldsymbol\nu(y))^\top] \otimes(\boldsymbol U_1(x)(\nabla \times \boldsymbol U_2(y))^{\top})\,\mathrm{d}s(x)\,\mathrm{d}s(y)\\ 
    &\quad+\int_{\partial B_R}\int_{\partial B_R} \mathbb E[(\boldsymbol{E}(x) \times \boldsymbol\nu(x))T_{\rm M} (\boldsymbol E(y)\times\boldsymbol\nu(y))^\top] \otimes((\nabla \times \boldsymbol U_1(x))( \boldsymbol U_2(y))^{\top})\,\mathrm{d}s(x)\,\mathrm{d}s(y)\\ 
    &\quad+\int_{\partial B_R}\int_{\partial B_R} \mathbb E[T_{\rm M} (\boldsymbol E(x)\times\boldsymbol\nu(x))T_{\rm M} (\boldsymbol E(y)\times\boldsymbol\nu(y))^\top] \otimes( \boldsymbol U_1(x) \boldsymbol U_2(y)^{\top})\,\mathrm{d}s(x)\,\mathrm{d}s(y).
\end{align*}  Noting that \begin{align*}
    \mathbb F_1(x,y) &:=\mathbb E[(\boldsymbol{E}(x) \times \boldsymbol\nu(x))( \boldsymbol{E}(y) \times \boldsymbol\nu(y))^\top],\\ 
    \mathbb F_2(x,y) &:= \mathbb E[T_{\rm M} (\boldsymbol E(x)\times\boldsymbol\nu(x))( \boldsymbol{E}(y) \times \boldsymbol\nu(y))^\top], \\
    \mathbb F_3(x,y) &:=\mathbb E[T_{\rm M} (\boldsymbol E(x)\times\boldsymbol\nu(x))T_{\rm M} (\boldsymbol E(y)\times\boldsymbol\nu(y))^\top],
\end{align*} we derive the following estimate by an application of Cauchy--Schwartz inequality 
\begin{align*}
   &\left| \int_{\mathbb R^3} \sigma\boldsymbol U_1^{\top}\boldsymbol U_2{\rm d}x \right| \notag\\
   &\leq C \epsilon \Big(\|\boldsymbol U_1\|_{L^2(\partial B_R)^3}\|\boldsymbol U_2\|_{L^2(\partial B_R)^3}
+ \|\nabla \times \boldsymbol U_1\|_{L^2(\partial B_R)^3}\|\boldsymbol U_2\|_{L^2(\partial B_R)^3}\notag\\
&\quad+ \|\boldsymbol U_1\|_{L^2(\partial B_R)^3}\|\nabla \times \boldsymbol U_2\|_{L^2(\partial B_R)^3}
+\|\nabla \times \boldsymbol U_1\|_{L^2(\partial B_R)^3}\|\nabla \times \boldsymbol U_2\|_{L^2(\partial B_R)^3}\Big),
\end{align*} 
where $C$ is dependent on $k$ and $R$.
Then since the equations \eqref{Maxw} reduce to Helmholtz equations $(\Delta+k^2)\boldsymbol U_j=0$ near $\partial B_R$, by applying the trace theorem and 
standard interior regularity estimate of elliptic equations we obtain \eqref{key}. The proof is complete.
\end{proof}

Based on the estimate \eqref{key},
in the following we derive a stability estimate of recovering the strength $\sigma$ from the correlation data. 
To this end, we shall choose appropriate test functions $\boldsymbol U_1$ and $\boldsymbol U_2$  which are solutions to the associated Maxwell equation \eqref{Maxw}. 

The following lemma given by \cite{Colton1992}
provides us with useful complex geometric optics (CGO) solutions. 
\begin{lemma}\label{PHCGO}
   Let ${\boldsymbol \zeta},{\boldsymbol \eta} \in \mathbb C^3$ such that ${\boldsymbol \zeta} \cdot {\boldsymbol \zeta}=k^2$, ${\boldsymbol \zeta} \cdot {\boldsymbol \eta}=0$ and $|\Im {\boldsymbol \zeta}| \ge M_1$. Then there exists a solution to \[
   \nabla \times (\nabla \times\boldsymbol U)-k^2n \boldsymbol U=0 \quad\mathrm{in} \quad B_R
   \] satisfying $\boldsymbol U(x,{\boldsymbol \zeta},{\boldsymbol \eta})=e^{{\rm i}{\boldsymbol \zeta} \cdot x}({\boldsymbol \eta}+f(x,{\boldsymbol \zeta},{\boldsymbol \eta}){\boldsymbol \zeta}+\boldsymbol V(x,{\boldsymbol \zeta},{\boldsymbol \eta}))$, where the function and the vector field satisfy \begin{align}\label{cgoestimate}
       \|f(\cdot, {\boldsymbol \zeta},{\boldsymbol \eta})\|_{L^2(B_{R'})^3}+\|\boldsymbol V(\cdot, {\boldsymbol \zeta},{\boldsymbol \eta})\|_{L^2(B_{R'})^3} \le \frac{M_2|{\boldsymbol \eta}|}{|\Im {\boldsymbol \zeta}|}.
   \end{align} Here $M_1$ and $M_2$ are two constants depending on $k$ and $R'$ with $R'>R$.
\end{lemma}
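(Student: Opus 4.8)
The plan is to carry out the classical complex geometric optics (CGO) construction for Maxwell's equations, keeping explicit track of the splitting of the remainder into a longitudinal part along $\boldsymbol\zeta$ and a genuine corrector $\boldsymbol V$. The first step is to trade the curl--curl operator for a second order operator with scalar principal part. Using $\nabla\times(\nabla\times\boldsymbol U)=\nabla(\nabla\cdot\boldsymbol U)-\Delta\boldsymbol U$, equation \eqref{Maxw} becomes $\Delta\boldsymbol U+k^2n\boldsymbol U=\nabla(\nabla\cdot\boldsymbol U)$; taking the divergence shows that every Maxwell solution obeys the constraint $\nabla\cdot(n\boldsymbol U)=0$, i.e. $\nabla\cdot\boldsymbol U=-n^{-1}\nabla n\cdot\boldsymbol U$, and substituting this back yields
\[
(\Delta+k^2)\boldsymbol U=Q\boldsymbol U,\qquad Q\boldsymbol U:=k^2m\boldsymbol U-\nabla\big(n^{-1}\nabla n\cdot\boldsymbol U\big),
\]
where $Q$ is a first order differential operator whose coefficients are supported in $\overline{B_R}$ since $m=1-n\in C_0^\infty(B_R)$. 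Because the Schr\"odinger-type equation alone does not encode the constraint, I would actually work with the equivalent augmented first order (Dirac-type) system for a pair $(\boldsymbol U,\phi)$ with $\phi$ an auxiliary scalar field, arranged so that solutions with $\phi\equiv0$ are exactly the solutions of \eqref{Maxw}; squaring the Dirac operator reproduces a system of the above Schr\"odinger form with a matrix potential built from $n$ and $\nabla n$.

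Next I would build CGO solutions of $(\Delta+k^2)\boldsymbol Z=Q\boldsymbol Z$ by the standard Faddeev/H\"{a}hner fixed point argument. Writing $\boldsymbol Z=e^{{\rm i}\boldsymbol\zeta\cdot x}(\boldsymbol Z_0+\boldsymbol r)$ with leading profile $\boldsymbol Z_0$ the plane-wave amplitude associated with $\boldsymbol\eta$ --- so that $e^{{\rm i}\boldsymbol\zeta\cdot x}\boldsymbol Z_0$ solves the free system, using $\boldsymbol\zeta\cdot\boldsymbol\zeta=k^2$ and $\boldsymbol\zeta\cdot\boldsymbol\eta=0$ --- the corrector $\boldsymbol r$ must satisfy $(I-G_{\boldsymbol\zeta}Q)\boldsymbol r=G_{\boldsymbol\zeta}(Q\boldsymbol Z_0)$, where $G_{\boldsymbol\zeta}$ is the solution operator of the conjugated Laplacian $e^{-{\rm i}\boldsymbol\zeta\cdot x}(\Delta+k^2)e^{{\rm i}\boldsymbol\zeta\cdot x}=\Delta+2{\rm i}\boldsymbol\zeta\cdot\nabla$. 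By the Sylvester--Uhlmann/H\"{a}hner estimate, $G_{\boldsymbol\zeta}$ gains a factor $|\Im\boldsymbol\zeta|^{-1}$ between suitably weighted $L^2$ spaces, while $Q$, being a compactly supported bounded operator, is bounded in the reverse direction; choosing $M_1$ so that $\|G_{\boldsymbol\zeta}Q\|\le 1/2$ for $|\Im\boldsymbol\zeta|\ge M_1$, the Neumann series converges and gives $\|\boldsymbol r\|_{L^2(B_{R'})}\lesssim|\Im\boldsymbol\zeta|^{-1}\|Q\boldsymbol Z_0\|_{L^2(B_{R'})}\lesssim|\boldsymbol\eta|\,|\Im\boldsymbol\zeta|^{-1}$.

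Finally I would restore the Maxwell structure and read off the claimed form. Taking the leading profile of the augmented system with vanishing scalar component, one checks --- using $\boldsymbol\zeta\cdot\boldsymbol\zeta=k^2$, $\boldsymbol\zeta\cdot\boldsymbol\eta=0$, and uniqueness of the small corrector --- that the scalar component $\phi$ of the constructed solution vanishes identically, so that its vector part $\boldsymbol U$ solves \eqref{Maxw}; this is where the longitudinal correction is forced, since the corrector to $\boldsymbol U$ need not be divergence-free on its own. Splitting $\boldsymbol r$ into its component along the complex direction $\boldsymbol\zeta$ and a transversal remainder, $\boldsymbol r=f\boldsymbol\zeta+\boldsymbol V$, gives $\boldsymbol U=e^{{\rm i}\boldsymbol\zeta\cdot x}(\boldsymbol\eta+f\boldsymbol\zeta+\boldsymbol V)$ with $\|f\|_{L^2(B_{R'})}+\|\boldsymbol V\|_{L^2(B_{R'})}\lesssim|\boldsymbol\eta|\,|\Im\boldsymbol\zeta|^{-1}$, i.e. \eqref{cgoestimate}, with $M_2$ depending only on $k$ and $R'$. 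I expect the main obstacle to be the treatment of the first order part of $Q$ (equivalently, the vector and constraint structure of Maxwell): one must set up the augmented Dirac/Schr\"odinger system so that its effective potential behaves like a genuine bounded, compactly supported perturbation to which the Faddeev--H\"{a}hner resolvent bound applies with the full gain $|\Im\boldsymbol\zeta|^{-1}$, and so that the vanishing of the auxiliary scalar is inherited automatically by the CGO solution rather than merely imposed on its leading term; once this is arranged, the contraction estimate and the rearrangement into the $f\boldsymbol\zeta+\boldsymbol V$ form are routine.
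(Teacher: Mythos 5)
First, a point of reference: the paper does not prove this lemma at all --- it is quoted verbatim from \cite{Colton1992}, so there is no in-paper argument to compare against. Your outline is essentially the construction used in that reference (reduce the curl--curl equation via the constraint $\nabla\cdot(n\boldsymbol U)=0$ to a Schr\"odinger-type system, conjugate by $e^{{\rm i}\boldsymbol\zeta\cdot x}$ using $\boldsymbol\zeta\cdot\boldsymbol\zeta=k^2$, invert $\Delta+2{\rm i}\boldsymbol\zeta\cdot\nabla$ with the Faddeev/Sylvester--Uhlmann--H\"ahner bound, and run a fixed-point argument), so in spirit you are reproducing the cited proof rather than finding a new one.

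There is, however, one genuine gap, and it sits exactly at the step you flag as ``the main obstacle'' and then defer. Your contraction argument requires $\|G_{\boldsymbol\zeta}Q\|\le 1/2$ for $|\Im\boldsymbol\zeta|\ge M_1$ on the grounds that $Q$ is ``a compactly supported bounded operator.'' It is not: $Q\boldsymbol U=k^2m\boldsymbol U-\nabla\big(n^{-1}\nabla n\cdot\boldsymbol U\big)$ contains a first-order term, and the H\"ahner-type estimates give $\|G_{\boldsymbol\zeta}\|_{L^2\to L^2}\lesssim|\Im\boldsymbol\zeta|^{-1}$ but only $\|G_{\boldsymbol\zeta}\|_{L^2\to H^1}\lesssim 1$; you may gain a derivative or a power of $|\Im\boldsymbol\zeta|$, not both. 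Consequently $G_{\boldsymbol\zeta}Q$ is merely $O(1)$ as $|\Im\boldsymbol\zeta|\to\infty$, the Neumann series need not converge, and the bound $\|\boldsymbol r\|_{L^2(B_{R'})}\lesssim|\boldsymbol\eta|\,|\Im\boldsymbol\zeta|^{-1}$ does not follow from what you have written. The standard repair --- and the actual content of the Colton--P\"aiv\"arinta lemma --- is the explicit substitution/augmentation that converts the first-order perturbation into a zeroth-order matrix potential (built from $n$, $\nabla\log n$, and second derivatives of $n^{1/2}$) acting on an enlarged unknown, together with the verification that the auxiliary scalar component vanishes identically for the constructed solution so that one genuinely recovers a Maxwell solution; this is also precisely where the longitudinal term $f\boldsymbol\zeta$ is forced to appear. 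You name this fix but do not carry it out or verify either of its two nontrivial claims (zeroth-order effective potential; propagation of the constraint to the full CGO solution, not just its leading profile). Everything else in your outline --- the reduction identity, the conjugation, the $O(|\boldsymbol\eta|)$ bound on $Q\boldsymbol Z_0$, the splitting $\boldsymbol r=f\boldsymbol\zeta+\boldsymbol V$ --- is routine and correct; as it stands, though, the proposal is a plan whose decisive step is left open.
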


Our main result on stability of the inverse random source problem is stated as follows.
\begin{theorem}\label{mainstab}
    Assume $\sigma$ satisfies the a priori regularity conditions
    \[
    \|\sigma\|_{L^\infty(\mathbb R^3)}, \, \|\sigma\|_{H^s(\mathbb R^3)} \le Q.
    \] with $s>0$ and $Q>0$ being two positive constants. Then there holds the stability estimate \[
     \|\sigma\|_{L^2(\mathbb R^3)} \le C \frac{1}{(-\log \epsilon)^\frac{4s}{7+2s}},
    \] where $C$ is a positive constant dependent on $k, Q, M_1, M_2$.
\end{theorem}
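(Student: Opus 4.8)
The plan is to combine the integral estimate \eqref{key} with the complex geometric optics (CGO) solutions of Lemma \ref{PHCGO}, choosing the complex frequencies so that the bilinear product $\boldsymbol U_1^\top\boldsymbol U_2$ reconstructs one Fourier coefficient of $\sigma$; then \eqref{key} turns into a bound on $\widehat\sigma(\xi)$ for frequencies below a cutoff, which is interpolated against the a priori bound $\|\sigma\|_{H^s}\le Q$ and optimized over the cutoff and the CGO parameter. First, fix $\xi\in\mathbb R^3\setminus\{0\}$ and a large parameter $\tau$, and complete $\widehat\xi=\xi/|\xi|$ to an orthonormal frame $\{\widehat\xi,\kappa,\theta\}$ of $\mathbb R^3$. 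Set
\[
\boldsymbol\zeta_1=-\tfrac{\xi}{2}+\tau\kappa+{\rm i}b\theta,\qquad
\boldsymbol\zeta_2=-\tfrac{\xi}{2}-\tau\kappa-{\rm i}b\theta,\qquad
b=\Big(\tau^2+\tfrac{|\xi|^2}{4}-k^2\Big)^{1/2},
\]
so that $\boldsymbol\zeta_j\cdot\boldsymbol\zeta_j=k^2$, $\boldsymbol\zeta_1+\boldsymbol\zeta_2=-\xi$, $\boldsymbol\zeta_1\cdot\boldsymbol\zeta_2=\tfrac{|\xi|^2}{2}-k^2$, and $|\Im\boldsymbol\zeta_j|=b\asymp\tau$, which exceeds $M_1$ once $\tau\gtrsim k+M_1$. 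One then picks polarization vectors $\boldsymbol\eta_1,\boldsymbol\eta_2\in\mathbb C^3$ with $\boldsymbol\eta_j\cdot\boldsymbol\zeta_j=0$, $|\boldsymbol\eta_j|\lesssim 1$ and $\boldsymbol\eta_1^\top\boldsymbol\eta_2$ bounded away from zero for $|\xi|\le\tau$; the real transverse component $\tau\kappa$ of $\boldsymbol\zeta_j$ is precisely what provides the room to satisfy this last, nondegeneracy, requirement. Let $\boldsymbol U_j=\boldsymbol U(\cdot,\boldsymbol\zeta_j,\boldsymbol\eta_j)$ be the CGO solutions furnished by Lemma \ref{PHCGO}.

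Since $\boldsymbol\zeta_1+\boldsymbol\zeta_2=-\xi$ is real,
\[
\boldsymbol U_1^\top\boldsymbol U_2=e^{-{\rm i}\xi\cdot x}\big(\boldsymbol\eta_1+f_1\boldsymbol\zeta_1+\boldsymbol V_1\big)^\top\big(\boldsymbol\eta_2+f_2\boldsymbol\zeta_2+\boldsymbol V_2\big),
\]
whose leading term integrates against $\sigma$ to $(\boldsymbol\eta_1^\top\boldsymbol\eta_2)\widehat\sigma(\xi)$. Every remaining term carries a factor $f_j$ or $\boldsymbol V_j$; using the relations $\boldsymbol\eta_i\cdot\boldsymbol\zeta_j=-\xi\cdot\boldsymbol\eta_i$, $\boldsymbol\zeta_1\cdot\boldsymbol\zeta_2=\tfrac{|\xi|^2}{2}-k^2$, the bound \eqref{cgoestimate}, $|\boldsymbol\zeta_j|\lesssim\tau$ and $\|\sigma\|_{L^\infty}\le Q$, these terms are seen to contribute $\lesssim Q\,\delta(\tau,\xi)$, where $\delta(\tau,\xi)\to0$ as $\tau/|\xi|\to\infty$ (the power of $|\xi|/\tau$ being read off from the CGO remainder estimate). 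In the other direction, $|e^{{\rm i}\boldsymbol\zeta_j\cdot x}|\le e^{bR'}$ on $B_{R'}$ together with $|\boldsymbol\eta_j|\lesssim 1$ and $|\boldsymbol\zeta_j|\,\|f_j\|_{L^2(B_{R'})}\lesssim 1$ gives $\|\boldsymbol U_j\|_{L^2(B_{R'})}\lesssim e^{bR'}\lesssim e^{C\tau}$. Inserting both into \eqref{key} and dividing by $\boldsymbol\eta_1^\top\boldsymbol\eta_2$ yields, for all $|\xi|\le\rho$ with $\rho\le\tau$,
\[
|\widehat\sigma(\xi)|\ \lesssim\ \epsilon\,e^{C\tau}+Q\,\delta(\tau,\rho).
\]

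Then, by Plancherel and the a priori smoothness,
\[
\|\sigma\|_{L^2(\mathbb R^3)}^2\ \lesssim\ \int_{|\xi|\le\rho}|\widehat\sigma|^2\,{\rm d}\xi+\int_{|\xi|>\rho}|\widehat\sigma|^2\,{\rm d}\xi\ \lesssim\ \rho^3\big(\epsilon\,e^{C\tau}+Q\,\delta(\tau,\rho)\big)^2+Q^2\rho^{-2s}.
\]
Choosing $\tau$ a small multiple of $-\log\epsilon$ makes $\epsilon e^{C\tau}$ a positive power of $\epsilon$, hence negligible against the CGO-remainder term; choosing $\rho$ to be the power of $-\log\epsilon$ that balances the remaining low-frequency contribution $\rho^3\delta(\tau,\rho)^2$ against the tail $\rho^{-2s}$, and keeping track of the exact exponents produced by the expansion and by \eqref{cgoestimate}, then gives the asserted rate $\|\sigma\|_{L^2}\le C(-\log\epsilon)^{-\frac{4s}{7+2s}}$.

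The step I expect to be the main obstacle is the construction of the CGO pair together with the uniform-in-$\xi$ control of the remainders: the simultaneous constraints $\boldsymbol\zeta_1+\boldsymbol\zeta_2=-\xi$, $\boldsymbol\zeta_j\cdot\boldsymbol\zeta_j=k^2$, $\boldsymbol\zeta_j\cdot\boldsymbol\eta_j=0$ and $\boldsymbol\eta_1^\top\boldsymbol\eta_2$ nondegenerate form a genuine geometric condition on complex vectors in $\mathbb C^3$ with no counterpart in the scalar Helmholtz setting of \cite{LL}, and one must then verify that $(\boldsymbol\eta_1^\top\boldsymbol\eta_2)\widehat\sigma(\xi)$ dominates the $f_j$- and $\boldsymbol V_j$-contributions in \eqref{key} uniformly over the entire range $|\xi|\le\rho$. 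It is exactly this requirement — that $\tau$ must outgrow $\rho$, which in turn inflates the exponential $e^{C\tau}$ in the measurement term — that forces the stability estimate to be merely logarithmic.
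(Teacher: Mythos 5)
Your proposal is correct and follows essentially the same route as the paper: the same CGO pair with $\boldsymbol\zeta_1+\boldsymbol\zeta_2=-\xi$, $\boldsymbol\zeta_j\cdot\boldsymbol\zeta_j=k^2$ and polarizations $\boldsymbol\eta_j$ whose product $1-|\xi|^2/(4t^2)$ stays away from zero, the same splitting of $\boldsymbol U_1^\top\boldsymbol U_2=e^{-{\rm i}\xi\cdot x}(\text{leading}+r)$ with the remainder controlled by \eqref{cgoestimate}, and the same low/high-frequency decomposition optimized via $\rho=t^{2/(7+2s)}$ and $t\sim-\log\epsilon$. Your phrasing of the final parameter choice (taking $\tau$ a sufficiently small multiple of $-\log\epsilon$ so that $\epsilon e^{C\tau}$ is a positive power of $\epsilon$) is in fact the careful version of the paper's choice $t=-\frac{1}{2R'}\log\epsilon$.
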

\begin{proof}
 We choose ${\boldsymbol \zeta}$ and ${\boldsymbol \eta}$ as follows.
For any given $\xi \in \mathbb R^3$, take ${\boldsymbol d}_1,{\boldsymbol d}_2 \in \mathbb S^2$ such that $\{\widehat \xi, {\boldsymbol d}_1,{\boldsymbol d}_2\}$ forms an orthonormal basis of $\mathbb R^3$. Under this basis, set \begin{align*}
    &{\boldsymbol \zeta}_1=\left(-\frac{|\xi|}{2},{\rm i}\sqrt{t^2-k^2+\frac{|\xi|^2}{4}},t\right),\quad {\boldsymbol \zeta}_2=\left(-\frac{|\xi|}{2},-{\rm i}\sqrt{t^2-k^2+\frac{|\xi|^2}{4}},-t\right), \\ &{\boldsymbol \eta}_1=\left(1,0,\frac{|\xi|}{2t}\right),\quad {\boldsymbol \eta}_2=\left(1,0,-\frac{|\xi|}{2t}\right),
\end{align*} where $t$ is a parameter satisfying $t \ge M_1+2k$. Then it follows from Lemma \ref{PHCGO} that there exist CGO solutions $\boldsymbol U_j=e^{{\rm i}{\boldsymbol \zeta}_j \cdot x}({\boldsymbol \eta}_j+f_j(x,{\boldsymbol \zeta}_j,{\boldsymbol \eta}_j){\boldsymbol \zeta}_j+\boldsymbol V_j(x,{\boldsymbol \zeta}_j,{\boldsymbol \eta}_j))$, $j=1,2$, satisfying \begin{align}
    \label{remainder}\|f(\dot,{\boldsymbol \zeta},{\boldsymbol \eta})\|_{L^2(B_{R'})}+\|\boldsymbol V(\dot,{\boldsymbol \zeta},{\boldsymbol \eta})\|_{L^2(B_{R'})^3} \le \frac{M_2|{\boldsymbol \eta}_j|}{|\Im {\boldsymbol \zeta}_j|} \le   \frac{2M_2+\frac{M_2|\xi|}{M_1}}{t}.
\end{align}
A direct calculation gives \begin{align*}
    \|\boldsymbol U_j\|_{L^2( B_{R'})^3} &\le \|e^{{\rm i}{\boldsymbol \zeta}_j \cdot x} {\boldsymbol \eta}_j\|_{L^2( B_{R'})^3}+\|e^{{\rm i}{\boldsymbol \zeta}_j \cdot x}f {\boldsymbol \zeta}_j\|_{L^2( B_{R'})^3}+\|e^{{\rm i}{\boldsymbol \zeta}_j \cdot x}\boldsymbol V\|_{L^2( B_{R'})^3} \\ &\le C e^{(t+|\xi|)R'}\left(1+\frac{|\xi|}{t} \right).
\end{align*}
Inserting the CGO solutions constructed above into \eqref{key} yields \begin{align}\label{left}
    \left|\int_{\mathbb R^3 } \sigma \boldsymbol U_1 \cdot \boldsymbol U_2\,\mathrm{d}x\right| \le C e^{2(t+|\xi|)R'}\left(1+\frac{|\xi|}{t}\right)\epsilon.
\end{align}
Notice that \begin{align*}
    \boldsymbol U_1(x,{\boldsymbol \zeta},{\boldsymbol \eta}) \cdot \boldsymbol U_2(x,{\boldsymbol \zeta},{\boldsymbol \eta})=e^{-{\rm i}\xi \cdot x}\Big(1-\frac{|\xi|^2}{4t^2}+r(x)\Big),
\end{align*} where $r(x)$ is given by \begin{align*}
    r(x) &=-|\xi|(f(x,{\boldsymbol \zeta}_1,{\boldsymbol \eta}_1)+f(x,{\boldsymbol \zeta}_2,{\boldsymbol \eta}_2))+f(x,{\boldsymbol \zeta}_1,{\boldsymbol \eta}_1)f(x,{\boldsymbol \zeta}_2,{\boldsymbol \eta}_2)(|\xi|^2/2-k^2) \\ &\quad+\boldsymbol V(x,{\boldsymbol \zeta}_1,{\boldsymbol \eta}_1) \cdot \boldsymbol V(x,{\boldsymbol \zeta}_2,{\boldsymbol \eta}_2) +{\boldsymbol \eta}_1 \cdot \boldsymbol V(x,{\boldsymbol \zeta}_2,{\boldsymbol \eta}_2) + {\boldsymbol \eta}_2 \cdot \boldsymbol V(x,{\boldsymbol \zeta}_1,{\boldsymbol \eta}_1)\\ &\quad+f(x,{\boldsymbol \zeta}_2,{\boldsymbol \eta}_2){\boldsymbol \eta}_2 \cdot\boldsymbol V(x,{\boldsymbol \zeta}_1,{\boldsymbol \eta}_1)+f(x,{\boldsymbol \zeta}_1,{\boldsymbol \eta}_1){\boldsymbol \eta}_1 \cdot\boldsymbol V(x,{\boldsymbol \zeta}_2,{\boldsymbol \eta}_2).
\end{align*}
By  applying \eqref{remainder} we get  \begin{align}\label{right}
    \|r\|_{L^1(B_{R'})} \le C \left(\frac{|\xi|^2+|\xi|}{t}+\frac{|\xi|^4+1}{t^2}\right) \le C \frac{1+|\xi|^4}{t^2}
\end{align} since $t \ge M_1+2k$.
Since $\|\sigma\|_{L^\infty(\mathbb R^3)} \le Q$, combining \eqref{left} and \eqref{right} yields \begin{align*}
    |\widehat\sigma(\xi)|=\left|\int_{\mathbb R^3 } \sigma(x)  e^{-{\rm i}\xi\cdot x}\,\mathrm{d}x\right| \le C \left(e^{2(t+|\xi|)R'}\left(1+\frac{|\xi|}{t}\right)\epsilon +\frac{1+|\xi|^4}{t^2} \right).
\end{align*}  
Then noting $\|\sigma\|_{H^s(\mathbb R^3)} \le Q$ we arrive at \begin{align*}
    \|\sigma\|_{L^2(\mathbb R^3)} &\le C \left(\left|\int_{|\xi|<\rho} |\widehat \sigma(\xi)|^2\,\mathrm{d}\xi\right| + \left|\int_{|\xi| \ge \rho} |\widehat \sigma(\xi)|^2\,\mathrm{d}\xi\right| \right) \\ &\le C \left( \rho^3\left(e^{2(t+\rho)R'}\left(1+\frac{\rho}{t}\right)\epsilon +\frac{1+\rho^4}{t^2} \right)+ \frac{\|\sigma\|_{H^s(\mathbb R^3)}}{\rho^{2s}} \right) \\ &\le C\left( \rho^3e^{2(t+\rho)R'}\left(1+\frac{\rho}{t}\right)\epsilon +\frac{\rho^7}{t^2} + \frac{1}{\rho^{2s}} \right).
\end{align*} Here $\rho>1$ is a positive constant. By letting $\rho=t^{\frac{2}{7+2s}}$ we obtain \begin{align*}
    \|\sigma\|_{L^2(\mathbb R^3)} &\le C \left( t^\frac{6}{7+2s}e^{2(t+t^\frac{2}{7+2s})R'}\left(1+t^{-\frac{5+2s}{7+2s}}\right)\epsilon +t^{-\frac{4s}{7+2s}}  \right) \le C \left( t^\frac{6}{7+2s}e^{4tR'}\epsilon +t^{-\frac{4s}{7+2s}}  \right)
\end{align*} for $t>1$. Without loss of generality, by assuming $\epsilon$ is a sufficiently small constant, we are able to choose $t=-\frac{1}{2R'}\log \epsilon$ which yields $t>\{1,M_1+2k\}$ and $\rho=t^{\frac{2}{7+2s}}>1$. As a result, we derive the stability estimate \begin{align*}
     \|\sigma\|_{L^2(\mathbb R^3)} \le C \frac{1}{(-\log \epsilon)^\frac{4s}{7+2s}},
\end{align*} 
and it can be seen from the proof that the constant $C$ depends on $k, Q, M_1, M_2$. The proof is complete.
\end{proof}
\begin{remark}
The stability implies the uniqueness of the inverse random source problems for the Maxwell equation at a fixed frequency. It is of logarithmic type which illustrates that the inverse problem is ill-posed. Moreover, the stability increases if the regularity of the strength improves.
\end{remark}

\section{Conclusion}

In this paper, we have investigated the inverse random source problem for Maxwell equations in an inhomogeneous medium. 
We have established the well-posedness of the direct problem and obtained the estimates and regularity of the solution.
By using the complex geometric optics solutions to the associated Maxwell equation, we have obtained a logarithmic stability of determining the strength
of the random source by using Dirichlet data only at a single frequency. The methods developed this work can be used to study other stochastic wave equations
in inhomogeneous media or with potential functions such as the elastic wave equation and Schr\"odinger equations.
A more challenging direction is to study the case where the strength functions
are different for each component of the random source. In this case, the constructions of the complex geometric optics solutions are much more complicated.
We hope to be able to report progresses on these problems in future works.

\end{document}